\documentclass[11pt,twoside, letter]{article}
\pagestyle{myheadings}
\markboth{\small \sc Cialenco}{\small \sc Statistical Inference for SPDEs}


\usepackage{amsthm, amsmath, amssymb, amsfonts, graphicx, epsfig}
\usepackage{accents}

\usepackage{bbm}
\usepackage{mathrsfs}


\usepackage{cancel}

\usepackage{epstopdf}

\usepackage{graphicx}
\usepackage[font=sl,labelfont=bf]{caption}
\usepackage{subcaption}

\usepackage{float}
\restylefloat{table}



\usepackage{enumerate}





\usepackage[usenames,dvipsnames]{color}

\usepackage[colorlinks=true, pdfstartview=FitV, linkcolor=blue,
            citecolor=blue, urlcolor=blue]{hyperref}
\usepackage[usenames]{color}

\usepackage{url}
\makeatletter\def\url@leostyle{%
 \@ifundefined{selectfont}{\def\UrlFont{\sf}}{\def\UrlFont{\scriptsize\ttfamily}}} \makeatother\urlstyle{leo}

\usepackage[framemethod=default]{mdframed}

\usepackage{array}


\usepackage{mathtools}
\mathtoolsset{showonlyrefs=true}

\usepackage[margin=1.0in, letterpaper]{geometry}


\newtheorem{theorem}{Theorem}

\theoremstyle{definition}

\theoremstyle{remark}
\newtheorem{remark}[theorem]{Remark}

\numberwithin{equation}{section}
\numberwithin{theorem}{section}

\definecolor{Red}{rgb}{0.9,0,0.0}
\definecolor{Blue}{rgb}{0,0.0,1.0}



\def\cB{\mathcal{B}}

\def\cD{\mathcal{D}}

\def\cK{\mathcal{K}}

\def\cM{\mathcal{M}}
\def\cN{\mathcal{N}}


\def\bE{\mathbb{E}}

\def\bN{\mathbb{N}}

\def\bP{\mathbb{P}}

\def\bR{\mathbb{R}}


\def\sF{\mathscr{F}}


\def\mV{\mathsf{V}}


\newcommand{\wh}{\widehat}

\newcommand{\pd}[1]{\partial_{#1}}      
\newcommand{\set}[1]{\{#1\}}            
\renewcommand{\mid}{\;|\;}              
\newcommand{\norm}[1]{ \| #1 \| }       

\DeclareMathOperator{\dif}{d \!}        

\DeclareMathOperator{\Var}{Var}          




\title{Statistical Inference for SPDEs: an overview}

\makeatletter
\def\and{%
  \end{tabular}%
  \begin{tabular}[t]{c}}%
\def\@fnsymbol#1{\ensuremath{\ifcase#1\or a\or b\or c\or
   d\or e\or f\or g\or h\or i\else\@ctrerr\fi}}
\makeatother

\author{
    Igor Cialenco \\[0.3ex]
        {\footnotesize Department of Applied Mathematics, Illinois Institute of Technology} \\
        {\footnotesize 10 W 32nd Str, Building E1, Room 208, Chicago, IL 60616, USA}\\
          \url{cialenco@iit.edu},   \url{http://math.iit.edu/\~igor}
         }

\date{ {\small   %
First Circulated: December 1, 2017
}}

\begin{document}

\maketitle

\vspace{-2em}

\smallskip

{\footnotesize
\begin{tabular}{l@{} p{350pt}}
  \hline \\[-.2em]
  \textsc{Abstract}: \ & The aim of this work is to give an overview of the recent developments in the area of statistical inference for parabolic stochastic partial  differential equations. Significant part of the paper is devoted to the spectral approach, which is the most studied sampling scheme under which the observations are done in the Fourier space over some finite time interval. We also discuss into details the practically important case of discrete sampling of the solution. Other relevant methodologies and some open problems are briefly discussed over the course of the manuscript.   \\[0.5em]
\textsc{Keywords:} \ &  parabolic SPDE, stochastic evolution equations, statistical inference for SPDEs, identification problems for SPDEs \\
\textsc{MSC2010:} \ & 60H15, 65L09, 62M99   \\[1em]
  \hline
\end{tabular}
}



\section{Introduction}\label{sec:Intro}

The general analytical theory for (linear and nonlinear) Stochastic Partial Differential Equations (SPDEs)  went through major advances during the past few decades and became a mature mathematical field. From practical point of view, SPDEs are used to describe and model the evolution of dynamical systems in the presence of persistent spatial-temporal uncertainties, and are key ingredients in modeling various phenomena from fluid mechanics, oceanography, temperature anomalies, finance, economics, biological and ecological systems, and many other applied disciplines. We refer to the classical monographs \cite{RozovskiiBook,DaPrato}, and also to the textbooks \cite{ChowBook,HairerBook2009,LototskyRozovsky2017Book}, for an in depth discussion of the theory of SPDEs and their various applications. While the general form of a particular stochastic evolution equation is commonly derived from the fundamental properties of the underlying processes under study, frequently the parameters arising in the formulation need to be specified or determined on the basis of some empirical observations. Moreover, even if the parameters are known as part of the specification of the underlying model, the observer may need to test how well the empirical data fits the considered model. These inverse type problems, naturally arising in practical applications, fall into realm of the well developed field of statistical inference for stochastic processes. Nevertheless, due to the infinitesimal nature of SPDEs, some of the statistical models arising from SPDEs are fundamentally different from their counterpart in (finite dimensional) stochastic ordinary differential equations (SODEs). Albeit, there exits  a good number of papers devoted to parameter estimation problems for SPDEs, it would be fair to say that the field of statistical inference for SPDEs is still in its developing stage with many fundamental problems still open.

The aim of this paper is to give an overview of the main methodologies developed in the area of statistical inference for SPDEs. Due to the page limitation, the detailed proofs will be omitted, and some results will be mentioned only briefly. We will also take as known most of the notions and fundamental results from infinite dimensional stochastic analysis. The paper is divided in (sub)sections, each of them being devoted to a general method or problem. All the relevant literature and the obtained results will be discussed within the corresponding subsection.

\smallskip

\noindent\textbf{The equation.} Throughout, we will assume that $(\Omega,\sF,\set{\sF_t}_{t\geq 0},\bP)$ is a stochastic basis satisfying the usual assumptions, and let $H$ be a separable Hilbert space endowed with the inner product $(\,\cdot\,,\,\cdot\,)$ and the corresponding norm $\norm{\cdot}$. The main object of interest is the following evolution equation
\begin{equation}\label{eq:mainAbstract}
\dif u(t) + \left(\theta A +  B\right)u(t) \dif t  = (M u(t) + \sigma) \dif W^Q(t),
\end{equation}
with the initial condition $u(0)=u_{0}\in H$, and where $A$ is a linear, positive defined, self-adjoint operator in $H$, $\cB$ is a linear or nonlinear operator in $H$, $\cM$ is an operator acting in a suitable Hilbert space,  $W^Q$ is a $Q$-cylindrical Brownian motion in $H$, $\sigma$ and $\theta$ are some positive constants. We will refer to $W^Q$ with $Q$ being the identity operator as space-time white noise, and simple write $W$. Although most of the results presented below hold true for general evolution equations under fair general assumptions, practically speaking, usually $A$ and $M$ are some (pseudo)differential operators, hence SPDEs. Since most of the literature on statistical inference for SPDEs is devoted to parabolic equations, we will assume that \eqref{eq:mainAbstract} is parabolic. Moreover, we will make the standing assumption that equation \eqref{eq:mainAbstract} has a unique solution in $H$ (or more precisely in the appropriate triple of Hilbert spaces). The solution is either weak, mild or strong in the PDE sense, and strong in the  probability sense. Of course, for each particular class of considered equations, one has to show that indeed the existence and uniqueness of the solution holds true; most of the examples considered here can be dealt within the abstract framework presented in \cite[Section~4.4]{LototskyRozovsky2017Book} and/or \cite{ChowBook,HairerBook2009,RozovskiiBook,DaPrato}.

\smallskip \noindent
\textbf{The statistical problem.}
We will be mainly interested in statistical inference problems related to the model parameters $\theta$ and/or $\sigma$, assuming that all other quantities are known to the observer. We will call $\theta$ the drift or viscosity coefficient, and $\sigma$ the volatility. If the only parameter of interest is $\theta$, then all the obtained results can be easily adjusted to the case when $\sigma$ is an adapted vector-valued function.

\smallskip
\noindent\textbf{Statistical inference for SPDEs vs SODEs.}
Let us consider a finite dimensional diffusion process of the form $\dif X(t)= \mu a(X)\dif t + \eta  b(X(t))\dif w(t)$, with $\mu$ and $\eta$ being the parameters of interest, and $a,b$ some suitable, real valued functions. Let us assume that the solution is observed continuously  over some finite interval of time. It is well known that estimating the volatility $\eta$ is a singular problem, namely the measures $\set{\bP_{\eta,T}, \eta>0}$, on the space of trajectories $C([0,T];\bR)$ generated by the solution $X$ are singular to each other. This indicates that $\sigma$ can be found exactly. For example,
using the `quadratic variation argument', we get $\eta = (\langle X \rangle_T/\int_0^TX^2(t)\dif t)^{\frac12}$. Of course, if the data is observed discretely, then we can only estimate $\eta$, for example by approximating the above formula, or by using some statistical methods.  Consistency of such estimators can be achieved by decreasing the frequency of the time step while keeping the time horizon $T$ fixed, or increasing the observation time while step size is fixed, or both.

On the other hand, estimating $\mu$ usually is a regular problem, in the sense that the measures $\set{\bP_{\mu,T}, \mu\in\bR}$, generated by the solution $X$ the SODE with drift $\mu$, are absolutely continuous to each other, and one can employ the statistical inference theory for stochastic processes to find some estimators for $\mu$, for example the maximum likelihood estimator (MLE).  In this case, the only way to find $\mu$ is either increase the time (large time asymptotics $T\to\infty$), or decrease the noise (small noise asymptotics $\eta\to0$). Discrete time observations will not change the nature of the problem.
Similar conclusions hold true for finite dimensional stochastic differential equations (SODEs), and there are known necessary and sufficient conditions for measures $\bP_{\mu,T}$ to be absolutely continuous to each other. We refer to classical monographs \cite{LiptserShiryayev,LiptserShiryayevBook1978,KutoyantsBook2004} for details on statistical inference for SODEs, and to \cite{Kozlov1977,Kozlov1978,KoskiLoges1985,Loges1984,MikuleviciusRozovskii1994,MikuleviciusRozovskii2002} for extension of Girsanov type theorems and absolute continue of measures  to infinite dimensional spaces.

As far as estimating volatility $\sigma$ for SPDEs, this is essentially the same as for SODEs. The key difference in parameter identification problems for SPDEs vs SODEs comes in estimating the drift $\theta$ - usually, if $A$ is the leading order operator, the measures $\set{\bP_{\theta,T},\ \theta>0}$, generated by the solution $u, \ t\in[0,T]$, are mutually singular. This indicates that the drift $\theta$ can be found exactly, if one path of the solution is observed as an element of the Hilbert space $H$ and continuously over a finite time interval. As with any singular problem, there is no universal method to study it, and each problem has to be analyzed separately. In many situations, a singular model can be approximated by regular models, and for a large class of SPDEs this can be achieved by applying MLE to a projection of the solution on an appropriate finite dimensional subspace of $H$; see Section~\ref{sec:spectral}. Alternatively, for some particular classes of SPDEs


\smallskip
\noindent\textbf{The observations.} As with any statistical inference problem, the first question to answer is what data is available to the observer.

From practical point of view, usually, the observer will take measurements of the solution $u(t,x)$ at some \textit{discrete time} points $t_i$ and/or some \textit{discrete spatial} points $x_j$, over one path/realization $\omega\in\Omega$. While practical, only few classes of linear SPDEs have been fully investigated under this sampling scheme; see Section~\ref{sec:discrete}.

Analogous to inverse problems for diffusion processes, it is reasonable to take  the continuous time and/or space observation approach for SPDEs too. Most of the literature on this subject for SPDEs is devoted to the so called \textit{spectral approach}, when the experimenter observes continuously in time one trajectory of  a finite number $N$ of the Fourier modes over some finite interval of time. We discuss this sampling scheme in Section~\ref{sec:spectral}. Note that in this case, estimating volatility $\sigma$ is a trivial task, and $\sigma$ can be found exactly by using quadratic variation argument, similar to finite dimensional diffusions. Thus, in the spectral approach, we focus only on estimating $\theta$.

In the context of the proposed observation schemes, there are several asymptotic regimes that one can study. Assuming spectral approach, fix $N$ and study the long time asymptotics $T\to\infty$. This usually reduces the problem to a finite dimensional system of SODEs that has been thoroughly investigated in the existing literature; cf. \cite{KutoyantsBook2004} and references therein. We will omit this analysis here, and for more details specific to SPDEs, we refer the reader to \cite{LototskyRozovsky2017Book,CXu2015} for the spectral approach, to \cite{KoskiLoges1985,Loges1984} for non-spectral approach, and to \cite{GoldysMaslowski2002} for controlled SPDEs.
As already mentioned, with the idea of approximating a singular model by regular models, in the spectral approach, we will take the large number of Fourier modes asymptotics, $N\to\infty$, which will be one of the main focuses of this paper; see Section~\ref{sec:spectral}. To some extent, this regime corresponds to the fine spacial resolution - at least from approximation point of view, the more sampling  points available in the physical domain, the more and better Fourier modes can be approximated, and knowing the solution for all spacial points is equivalent to knowing all Fourier coefficients of the solution. Another natural asymptotics to consider in spectral approach is long time and large space resolution $T,N\to\infty$, which was not yet study in the existing literature.

Given the singular nature of the problem, in the discrete sampling regime it makes sense to study first the asymptotics in large number of spacial points (fine spacial resolution) and/or large number of time points (fine time-step size), while keeping the time horizon $T$ fixed. We study these in Section~\ref{sec:discrete}. Alternatively, one can also study large time asymptotics, or a combination of all the above, which are still open problems.

Finally, a classical asymptotic regime for estimating $\theta$ is small noise asymptotics, i.e. the scaler coefficient in front of the noise goes to zero.
The literature on this regime is limited \cite{IbragimovKhasminskii1998,IbragimovKhasminskii1999,IbragimovKhasminskii2000,Huebner1999,PrakasaRao2003}. While practically speaking the observer rarely deals with vanishing noises, these are interesting problems to investigate.

Note that all the above sampling scheme assumed that only one realization or path of the solution is observed. Sampling multiple paths is another possibility, and besides elevating the methods developed for one path, one can use energy type estimates of  the solution to build statistical estimates for the parameters of interests.

\smallskip
\noindent\textbf{Classes of SPDEs.} It is well known that the structure of the noise of an SPDE, in our case the right hand side of \eqref{eq:mainAbstract}, may affect significantly the properties of the solution, and more importantly, the technical difficulties in proving certain results. The two main classes of noise structures are: \textit{additive noise}, when $M=0$, i.e. the solution is not part of the noise, and \textit{multiplicative noise} otherwise. Usually, dealing with multiplicative noise is more challenging, and statistical inference for SPDEs is not an exception in this regards.
There are only few results on parameter estimation problems for SPDEs driven by multiplicative noise \cite{PospivsilTribe2007,CialencoLototsky2009,Cialenco2010,CialencoHuang2017}, primarily due to the fact that the main methods of proofs exploit the diagonalizable structure of the additive noise.

The presence of the nonlinear operator $B$ in \eqref{eq:mainAbstract} makes the SPDE to be nonlinear, and as one may expect, studying such equations usually involves technics specific to the considered  equation.  In Section~\ref{sec:spectral} we will present a general method how to construct and study estimators for $\theta$ in the presence of nonlinear term and driven by an additive noise.

\smallskip\noindent
\textbf{Examples.} We will present several examples of SPDEs that we will consider over the course of this work. Again, we will omit the discussion on the existence and uniqueness of the solutions, and we will only mention that all these equations are well defined and do admit a unique solution in proper spaces under some technical assumptions that do agree with the assumptions encounter by the considered statistical problems and the proposed methods.

\noindent I) \textit{Stochastic heat equation driven by additive noise.} Consider the following evolution equation
\begin{equation}\label{eq:heatAdditive}
  \dif u(t,x) - \theta \Delta u(t,x)\dif t = \sigma \dif W^Q(t,x), \qquad t>0,
\end{equation}
with zero initial condition $u(0,x)=0$, and where $x\in G\subset\bR^d$, $\Delta u = \sum_{k=1}^{d}u_{x_kx_k}$ denotes the Laplace operator, $\theta,\sigma\in\bR_+$. As far as $G$, we will consider: I.a) the case of bounded domain, assuming that $G$ is a smooth, bounded domain in $\bR^d$, and we endow \eqref{eq:heatAdditive} with zero boundary condition; I.b) the case of full space, by taking $G=\bR^d$.

\smallskip\noindent II)
$\dif u(t,x) - (\Delta u(t,x) + \theta u(t,x))\dif t  = \sigma \dif W^Q(t,x)$, $t\in (0,T], \ x\in(0,\pi)$, with zero initial and boundary conditions, and where $\theta\in\bR$ and $\sigma\in\bR_+$.

\smallskip\noindent III) \textit{2D Navier--Stokes Equations forced with additive noise.} Consider the equation that describes the flow of a viscous,
incompressible fluid, known as Navier-Stokes equations, of the form,
\begin{align}\label{eq:NVS}
  \dif u -\theta \Delta u\dif t + (u\cdot\nabla u) u\dif t + \nabla P \dif t & = \sigma \dif W^Q(t), \\
  \nabla \cdot U &=0, \\
  u(0) & = u_0,
\end{align}
where $u$ represent the velocity field and $P$ the pressure.  We consider either zero boundary conditions on some $G\in\bR^2$, or periodic boundary conditions.

\smallskip\noindent IV) \textit{Stochastic heat equation, simple multiplicative noise}. A variation of Example~I,
\begin{equation}\label{eq:heatSimpleM}
  \dif u(t,x) - \theta \Delta u(t,x)\dif t = \sigma u(t,x)\dif w(t), \qquad t>0,
\end{equation}
where $x\in[0,\pi]$, $u(0)=u_0$, and $w$ is a  one dimensional standard Brownian motion.

\smallskip\noindent V) \textit{Stochastic heat equation driven by space-time multiplicative noise}. The multiplicative noise counterpart of Example~I,
\begin{equation}\label{eq:heatMultiple}
  \dif u(t,x) - \theta \Delta u(t,x)\dif t = \sigma u(t,x)\dif W^Q(t,x), \qquad t>0,
\end{equation}
with initial data $u(0)=u_0$, and either bounded domain $x\in G\subset{\bR^d}$ with Dirichlet boundary conditions, or the whole space $x\in\bR^d$.

\section{Spectral approach}\label{sec:spectral}
Most of the results on statistical inference for SPDEs are obtained within the so called spectral approach. The first key assumption is that the equation \eqref{eq:mainAbstract} is `diagonalizable'. That is, we assume that the operators $A$ and $Q$ have pure point spectrum, and a common system of eigenfunctions $\set{h_k}_{k\in\bN}$ that forms a complete orthonormal system in $H$. We will not make any assumptions on $B$ yet.
We denote by $\nu_k$ and $q_k^2$ the eigenvalues of $A$ and $Q$, respectively, corresponding to the eigenfunction $h_k$, for $k\in\bN$. Using this basis we define $H^{N} := \mbox{span}\set{h_{k}: k=1,\ldots,N}$. We denote by $P^{N}$ the projection operator from $H$ onto $H^{N}$, and we will write $u^{N} = P^{N}u = \sum_{k=1}^N u_k h_k$, where $u_k=(u,h_k), \ k\in\bN$, are the Fourier coefficients (or modes) of the solution $u$ with respect to $\set{h_k}_{k\in\bN}$. Note that in this case, $W^Q$ can be formally written as
\[
W^Q(t) = \sum_{k=1}^{\infty} q_k h_k w_k(t),
\]
where $w_k$ are independent standard Brownian motions. Also note that since $ H^{N} \cong \bR^{N}$, we can view $u^N$ as an element of  $\bR^N$ for any fixed $t\in[0,T]$.

Second key postulate in spectral approach is to assume that we observe continuously in time one path of the Fourier modes $u_k(t), k=1,\ldots,N$ over some finite interval of time $[0,T]$.

We are interested in estimating the parameter $\theta\in\Theta\subset\bR_+$, and we will assume that the positive constant $\sigma$ is known.

The class of equations to which the spectral approach can be applied is large and practically important. Usually, we are dealing with pseudo-differential operators on a bounded domain, for which there exist easy to check conditions that guarantee that the spectrum is pure discrete and the set of eigenfunctions form a basis. Moreover, the asymptotic behavior of the eigenvalues $\nu_k$ of $A$ is tightly related to the order $A$ (as a differential operator) and the dimension of the space $\bR^d$, namely\footnote{As usual, for two sequences of positive numbers $\{a_{n}\}_{n\in\bN}$ and $\{b_{n}\}_{n\in\bN}$, we will write $a_{n}\sim b_{n}$ if $\lim_{n\rightarrow\infty}a_{n}/b_{n}=1$, and will write $a_{n}\asymp b_{n}$ if there exist universal constants $K_{2}>K_{1}>0$, such that $K_{1}b_{n}\leq a_{n}\leq K_{2}b_{n}$ for $n\in\mathbb{N}$ large enough.}
$\nu_k \asymp k^{\textrm{ord(A)}/d}$. Thus, if $A = (-\Delta)^\beta$ is the fractional Laplace operator on some $G\subset\bR^d$, then $\nu_k\asymp k^{2\beta/d}$. For this analysis, the particular form of the eigenfunctions is not important. Note that, the differential operator, and hence SPDE, defined on the whole space will not have a pure point spectrum, and the spectral approach in principle can not be applied to these equations.

\subsection{Modified MLEs: general methodology}\label{sec:mMLE}
We will consider equations driven by an additive noise, i.e. $M=0$.

We apply the projection operator $P^N$ to \eqref{eq:mainAbstract}, and obtain
\begin{align}\label{eq:SPDEabsProj}
d u^N + (\theta A u^N(t) + \Psi^N )dt = \sigma dW^{Q,N}(t),
\quad u^N(0) = P^N u_{0},
\end{align}
where $\Psi^N = P^{N}B(u)$, and $W^{Q,N} = P^NW^Q$.

Note that \eqref{eq:SPDEabsProj} generally speaking is not an equation in $u^N$, unless $P^N$ commutes with $B$. Nevertheless, let us pretend that $\Psi^N$ is known, and denote by $\bP_\theta^{T,N}$ the probability measure on $C([0,T]; H^N)$ generated by the solution $u^N$ of \eqref{eq:SPDEabsProj}.
Under the \textit{unjustified} assumption that the family of measures $\{\bP_{\theta}^{T,N}(\cdot)\}_{\theta\in\Theta}$ are mutually absolutely continuous, we now fix a reference (true) parameter $\theta_{0}$ and formally apply the Girsanov theorem (see e.g. \cite[Section~7.6.4]{LiptserShiryayev}) to \eqref{eq:SPDEabsProj}, and we obtain the Radon-Nykodym derivative or Likelihood Ratio
\begin{align}
\frac{\dif \bP_{\theta}^{N}}{ \dif\bP_{\theta_{0}}^{N}} (u^{N}) =
\exp \Biggl(&
- (\theta - \theta_{0}) \sigma^{-2}\int_{0}^{T} (Q^{-1} A u^{N},\dif u^{N}) -
\frac{1}{2}  (\theta^{2} + \theta_{0}^{2})\sigma^{-2} \int_{0}^{T}   \norm{Q^{-1/2}Au^{N}}^2 \dif t
 \notag\\
 &- (\theta - \theta_{0}) \sigma^{-2}\int_{0}^{T} (Q^{-1}A u^{N},\Psi^{N}) \dif t
\Biggr),
\label{eq:likelyhoodRatio}
\end{align}
where with slight abuse of notations, some of the products are understood as dot products between vectors, or usual matrix multiplication.
Next, we formally compute the maximum likelihood estimator (MLE) for the parameter of interest $\theta$ by maximizing the log-likelihood ratio $\log( \dif \bP_{\theta}^{N}/ \dif \bP_{\theta_{0}}^{T,N}(u^{N}))$ with respect to $\theta$, and obtain the following estimator
\begin{align}
 \theta_{N}^\sharp = -\frac{\int_{0}^{T} (Q^{-1} A u^{N}, du^{N})   +\int_{0}^{T} (Q^{-1} A u^{N}, P^{N} B(u)) \dif t}
 {\int_{0}^{T}   \norm{ Q^{-1/2}A u^{N}}^2 \dif t}.
 \label{eq:EstGeneral0}
\end{align}
Note that  the estimator $\theta^\sharp$ will use the full spacial resolution of the solution $u$, unless $B$ commutes with $P^N$ which is the case for many linear equations. On the other hand, even if $B$ is nonlinear,  since $u^N$ converges to $u$ in some space, one would expect that replacing $u$ by $u^N$ would not change significantly the quality of the estimator $\theta^\sharp$. With this in mind, taking \eqref{eq:EstGeneral0} as an ansatz for our \textit{modified maximum likelihood estimators} (mMLEs), we introduce two additional degrees of freedom, $\alpha, \rho\in\bR$, and propose the following estimator for $\theta$
\begin{align}
 \widehat{\theta}_{N} &=
 	-\frac{\int_{0}^{T} (Q^{\rho} A^{\alpha} u^{N} , \dif u^{N})  +\int_{0}^{T} ( (Q^{\rho}A^{\alpha} u^{N}, P^{N} B(u^N)) \dif t}
 	{\int_{0}^{T}   \norm{Q^{\rho/2} A^{(1+\alpha)/2} u^{N}}^2  \dif t}. \label{eq:EstGeneral1}
\end{align}
\begin{remark}\label{rem:1} Several comments in order.
\begin{enumerate}[(i)]
\item If the equation \eqref{eq:mainAbstract} is linear, and $B$ has the same system of eigenfunctions $\set{h_k}_{k\in\bN}$, and thus $B$ commutes with $A$, and hence also with $Q$ and $P^N$, then $\theta^\sharp_N$ is the true MLE of $\theta$ that satisfies all desired asymptotical properties as $N\to\infty$. There is no need to modify the MLE in this case.
\item One has to be caution, since the true MLE is computed as maximum of the log-likelihood ratio over the domain $\Theta$ that supports $\theta$. For the considered SPDEs, $\theta$ is assumed to be at least positive, and in some applications $\Theta$ clearly is a finite interval, while there is no guarantee that $\widehat\theta_N$ will belong to this domain. This point is studied into details in \cite{LototskyRozovsky2017Book}. However, one may argue that if the estimator is consistent, then eventually, $\widehat{\theta}_N\in\Theta$ for large $N$, and the observer has just to discard the estimates that fall outside of $\Theta$.

\item If \eqref{eq:mainAbstract} is linear, but $B$ does not commute with $A$, then it is enough to modify $\theta^\sharp_N$ by projecting $u$ to $H^N$, i.e. take in \eqref{eq:EstGeneral1} $\alpha=1$ and $\rho=-1$. Hence, there is no need to deal with the two additional parameters $\alpha,\rho$, in order to prove consistency and asymptotical normality.
\item The additional parameters $\alpha, \rho$ are meant to take care of the nonlinear term while proving the asymptotic properties of the estimators. Usually, these free parameters are carefully chosen on case by case basis, and are dictated by the analytical properties of the solution.
\item  For some nonlinear equations, dropping out the nonlinear term in \eqref{eq:EstGeneral1}, and considering
\begin{equation} \label{eq:EstGeneral2}
\hat{\theta}^l_{N} =
 	-\frac{\int_{0}^{T} (Q^{\rho} A^{\alpha} u^{N} , \dif u^{N}) }
 	{\int_{0}^{T}   \norm{Q^{\rho/2} A^{(1+\alpha)/2} u^{N}}^2  \dif t},
\end{equation}
we will still pertain the consistency property. A lower computation complexity of the estimator $\hat\theta^l$ usually comes at the cost of a lower speed of convergence.
\item For some examples with nonlinear $B$ it useful to truncate $u$ in $B(u)$  at higher modes, and replace $u^N$ with $u^{N^p}$ for some $p\geq1$.
\end{enumerate}
\end{remark}
To establish the asymptotic properties of the estimators $\widehat{\theta}_N$ and $\hat\theta^l_N$, as usually, we write the estimators as $\theta+$`error terms', and prove that the error terms vanish. For example, using \eqref{eq:SPDEabsProj}, the estimator $\widehat\theta_N$ can be written as,
\begin{equation}\label{eq:True+Noise}
 \widehat{\theta}_{N} =  \theta -
 \frac{\sigma \int_{0}^{T} (Q^{\rho} A^{\alpha} u^{N},\dif W^{Q,N})  +\int_{0}^{T} ((Q^{\rho} A^{\alpha}  u^{N}, P^N(B(u^N)- B(u)) \dif t}
 	{\int_{0}^{T}   \norm{Q^{\rho/2} A^{(1+\alpha)/2} u^{N}}^2  \dif t}.
\end{equation}
In the next sections we will sketch the proofs, starting with the linear case.

\subsubsection{Linear Equations}
More details on estimation of $\theta$ for linear equations discussed in this section can be found in the recent monograph \cite[Chapter~6]{LototskyRozovsky2017Book}, and in the survey paper \cite{Lototsky2009Survey}.

\smallskip
\noindent\textbf{Fully diagonalizable equations.}
Let us first focus on linear case and additive noise, i.e. $B=0$, $M=0$, and zero initial data. In this case, the equation \eqref{eq:mainAbstract} is fully diagonalizable, and each Fourier mode is an Ornstein--Uhlenbeck process,
\begin{equation}\label{eq:OUk}
\dif u_k(t) +\theta \nu_k u_k\dif t = \sigma q_k \dif w_k(t), \quad k\geq 1.
\end{equation}
The estimator $\widehat{\theta}_N$ takes the form
\begin{equation}\label{eq:linearmMLE}
\widehat{\theta}_N = - \frac{\sigma \sum_{k=1}^N \int_{0}^{T} \nu_k^{\alpha} q_k^{2\rho+1} u_k \dif u_k(t)}
{\sum_{k=1}^{N} \int_0^T \nu_k^{1+\alpha}q_k^{2\rho}u_k^2(t) \dif t} =
\theta - \frac{\sigma \sum_{k=1}^N \int_{0}^{T} \nu_k^{\alpha} q_k^{2\rho+1} u_k \dif w_k(t)}
{\sum_{k=1}^{N} \int_0^T \nu_k^{1+\alpha}q_k^{2\rho}u_k^2(t) \dif t}.
\end{equation}
\begin{remark}\label{rem:curiousreader}
	A curious reader may ask why not to consider the MLE for $\theta$ by using individual Fourier mode \eqref{eq:OUk} then combine the first $N$ of these estimators into one estimator, rather than taking the MLE for the first $N$ Fourier modes at once. At intuitive level, since each Fourier mode is driven by an independent noise,  $N$ modes will contain more information than an individual one, and it makes sense to optimize over combined information rather than on individual piece of information and then combine/average them out. It turns out that indeed  $\widehat{\theta}_N$ has a higher rate of convergence, in the sense of the asymptotic normality property.  For a detailed discussion, see \cite[Chapter~6]{LototskyRozovsky2017Book}.
\end{remark}
\begin{theorem}\label{th:MLEadditive}
Assume that,  $ \nu_k\rightarrow \infty$, $q_k\rightarrow 0$, and $\alpha,\rho$ are such that
\begin{equation}\label{eq:condConv1}
\nu_{n}^{\alpha-1}q_{n}^{2\rho+2}\leq M, \ n\geq 1, \qquad \sum_{k\geq 1}\nu_{k}^{\alpha}q_{k}^{2\rho+2}=\infty,
\end{equation}
for some $M\in\bR$.
Then, $\widehat\theta_N$ is  a consistent estimator of $\theta$, i.e. $\widehat\theta_N\to\theta$ with probability one.
Moreover, if
\begin{equation}\label{eq:condConv2}
\sum_{k\geq 1}\nu_{k}^{2\alpha-1}q_{k}^{4\rho+4}=\infty,
\end{equation}
then,  $\widehat{\theta}_N$ is also asymptotically normal
\begin{equation}\label{eq:MLEnormal}
 \frac{\sum_{k=1}^{N}\nu_{k}^{\alpha}q_{k}^{2\rho+2}}{\sqrt{\sum_{k=1}^{N}\nu_{k}^{2\alpha-1}q_{k}^{4\rho+4}}} (\widehat{\theta}_N - \theta) \xrightarrow[N\to\infty]{\cD} \cN(0, \frac{2\theta}{T}).
\end{equation}
\end{theorem}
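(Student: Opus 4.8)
The plan is to exploit the representation already displayed in \eqref{eq:linearmMLE}, writing $\widehat\theta_N-\theta=-\xi_N/\eta_N$ with the stochastic (numerator) term $\xi_N=\sigma\sum_{k=1}^N\int_0^T\nu_k^\alpha q_k^{2\rho+1}u_k\,\dif w_k$ and the (denominator) term $\eta_N=\sum_{k=1}^N\int_0^T\nu_k^{1+\alpha}q_k^{2\rho}u_k^2(t)\,\dif t$. The crucial structural fact is that, with zero initial data, the modes $u_k$ are driven by mutually independent Brownian motions $w_k$, so both sums are sums of \emph{independent} terms, and $\xi_N$ is moreover the terminal value of the continuous martingale $M_s=\sigma\sum_{k=1}^N\int_0^s\nu_k^\alpha q_k^{2\rho+1}u_k\,\dif w_k$. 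I write $b_N=\sum_{k=1}^N\nu_k^\alpha q_k^{2\rho+2}$ and $v_N=\sum_{k=1}^N\nu_k^{2\alpha-1}q_k^{4\rho+4}$; the hypotheses say precisely that $b_N\to\infty$ (second part of \eqref{eq:condConv1}) and, for normality, that $v_N\to\infty$ (condition \eqref{eq:condConv2}).

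First I would record the moments of the Ornstein--Uhlenbeck mode \eqref{eq:OUk}: $\bE[u_k^2(t)]=\tfrac{\sigma^2q_k^2}{2\theta\nu_k}(1-e^{-2\theta\nu_k t})$, and the Gaussian covariance $\Cov(u_k(s),u_k(t))=\tfrac{\sigma^2q_k^2}{2\theta\nu_k}(e^{-\theta\nu_k|t-s|}-e^{-\theta\nu_k(s+t)})$ together with Wick's formula $\Cov(u_k^2(s),u_k^2(t))=2\,\Cov(u_k(s),u_k(t))^2$ give, after integrating in time, $\bE[\eta_N]=\tfrac{\sigma^2}{2\theta}\sum_{k=1}^N\nu_k^\alpha q_k^{2\rho+2}c_k$ with $c_k=T-\tfrac{1-e^{-2\theta\nu_k T}}{2\theta\nu_k}\to T$, and the variance bound $\Var\!\big(\int_0^T\nu_k^{1+\alpha}q_k^{2\rho}u_k^2\,\dif t\big)\lesssim\nu_k^{2\alpha-1}q_k^{4\rho+4}$ arising from the exponential decorrelation of the mode. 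The key observation is that the first part of \eqref{eq:condConv1}, $\nu_k^{\alpha-1}q_k^{2\rho+2}\le M$, forces the increments of $v_N$ to be dominated by those of $b_N$, i.e. $\nu_k^{2\alpha-1}q_k^{4\rho+4}\le M\,\nu_k^\alpha q_k^{2\rho+2}$; hence $\sum_k\Var(X_k)/b_k^2\lesssim M\sum_k(b_k-b_{k-1})/b_k^2<\infty$, where $X_k$ is the $k$-th summand of $\eta_N$. Since the $X_k$ are independent and $b_N\to\infty$, Kolmogorov's strong law of large numbers together with the Toeplitz lemma (using $c_k\to T$) gives $\eta_N/b_N\to\sigma^2T/(2\theta)$ a.s. The identical criterion applied to the independent mean-zero summands of $\xi_N$, whose variances are also $\asymp\nu_k^{2\alpha-1}q_k^{4\rho+4}$, yields $\xi_N/b_N\to0$ a.s., whence $\widehat\theta_N-\theta=-\xi_N/\eta_N\to0$ a.s., proving consistency.

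For asymptotic normality I would normalize and use Slutsky: $\tfrac{b_N}{\sqrt{v_N}}(\widehat\theta_N-\theta)=-\tfrac{M_T}{\sqrt{v_N}}\cdot\tfrac{b_N}{\eta_N}$, where $b_N/\eta_N\to2\theta/(\sigma^2T)$ a.s. by the previous step. It then remains to show $M_T/\sqrt{v_N}\xrightarrow{\cD}\cN(0,\sigma^4T/(2\theta))$. Setting $\tilde M^{(N)}_s=M_s/\sqrt{v_N}$, its quadratic variation is $\langle M\rangle_T/v_N=\tfrac{\sigma^2}{v_N}\sum_{k=1}^N\nu_k^{2\alpha}q_k^{4\rho+2}\int_0^T u_k^2\,\dif t$; the same Kolmogorov argument (now using $v_N\to\infty$ from \eqref{eq:condConv2} together with $\nu_k^{\alpha-1}q_k^{2\rho+2}\le M$, which bounds the summand variances by $M^2(v_k-v_{k-1})$) gives $\langle\tilde M^{(N)}\rangle_T\to\sigma^4T/(2\theta)$ a.s. The central limit theorem for continuous martingales (equivalently, the Dambis--Dubins--Schwarz time change; see \cite[Chapter~6]{LototskyRozovsky2017Book}) then yields $M_T/\sqrt{v_N}\xrightarrow{\cD}\cN(0,\sigma^4T/(2\theta))$. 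Multiplying by the a.s. limit $2\theta/(\sigma^2T)$, the limiting variance is $(2\theta/(\sigma^2T))^2\cdot\sigma^4T/(2\theta)=2\theta/T$, exactly as in \eqref{eq:MLEnormal}.

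I expect the main obstacle to be the $L^2$-bookkeeping: establishing the estimate $\Var(\int_0^T u_k^2\,\dif t)\asymp q_k^4/\nu_k^3$ from the exponential decorrelation of each mode, and verifying that the single hypothesis $\nu_k^{\alpha-1}q_k^{2\rho+2}\le M$ is precisely what converts the divergence of $b_N$ (resp. $v_N$) into Kolmogorov's summability condition for both the denominator law of large numbers and the quadratic-variation convergence driving the martingale CLT. Once the quadratic variation is shown to converge to a deterministic constant, the martingale CLT itself is standard.
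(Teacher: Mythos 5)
Your proposal is correct and takes essentially the same route as the paper's proof: the same decomposition of $\widehat\theta_N-\theta$ into a martingale numerator and an averaged denominator, consistency via the strong law of large numbers for the independent mode-wise terms with the variance bookkeeping supplied by the explicit Ornstein--Uhlenbeck moment formulas \eqref{eq:ukProps}, and asymptotic normality via a.s.\ convergence of the normalized quadratic variation followed by the central limit theorem for continuous martingales and Slutsky's argument. Your write-up is in fact more explicit than the paper's sketch on one point, namely how the first condition in \eqref{eq:condConv1} converts the divergence of $\sum_k\nu_k^{\alpha}q_k^{2\rho+2}$ (resp.\ of \eqref{eq:condConv2}) into the Kolmogorov summability condition through the bound $\nu_k^{2\alpha-1}q_k^{4\rho+4}\leq M(b_k-b_{k-1})$ and the telescoping estimate $\sum_k (b_k-b_{k-1})/b_k^2<\infty$.
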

\begin{proof}
Let us take
$$
\xi_k = \int_{0}^{T}\nu_k^\alpha q_k^{\rho+1} u_k \dif w_k, \quad
\eta_k =  \int_{0}^{T} \nu_k^{1+\alpha}q_k^{\rho} u_k^2 \dif t, \quad b_n = \sum_{k=1}^{n}\bE(\eta_k),
$$
and write
\begin{equation}\label{eq:linearMLE2}
  \widehat{\theta}_N = \theta -\sigma \frac{\sum_{k=1}^{N} \xi_k}{b_N} \cdot
  \frac{\sum_{k=1}^{N}\bE(\eta_k)}{\sum_{k=1}^{N}\eta_k} =: \theta - \sigma I_1 I_2 .
\end{equation}
Consistency follows from the (strong) law of large numbers (cf. \cite[Theorem~IV.3.2]{ShiryaevBookProbability}, or \cite[Lemma~2.2]{IgorNathanAditiveNS2010} for a weaker version). The only non-trivial conditions to check are the convergence of the following series
\begin{equation}\label{eq:linearMLE3}
\sum_{k\geq1} \frac{\Var\xi_n}{b_n^2} <\infty, \qquad \sum_{k\geq1} \frac{\Var\eta_n}{b_n^2} <\infty,
\end{equation}
which will imply, respectively, that $I_1\to 0$ a.s., and $I_2\to 1$ a.s., as $N\to\infty$, and by \eqref{eq:linearMLE2} the consistency of $\widehat\theta_N$ follows.
To verify \eqref{eq:linearMLE3}, one needs to establish precise asymptotic behavior of $\Var\xi_n, \Var\eta_n$ and $b_n$, which can be done by direct evaluations. For example, one can show that (see for instance \cite[Section~2]{Lototsky2009Survey} for details on these evaluations by several methods)
\begin{align}\label{eq:ukProps}
  \bE \int_0^T  u_k^2(t) \dif t & = \frac{\sigma^{2}q_k^2}{2\theta \nu_k}\left( T - \frac{(1-e^{-2\theta \nu_k T})}{2\theta \nu_k}\right), \\
  \Var \int_0^T u_k^2\dif t & = \frac{\sigma^{4}q_{k}^{4}}{4\theta^{2}\nu_{k}^{2}}\left(\frac{2T}{\theta\nu_{k}}+\frac{2e^{-2\theta\nu_{k}T}}{\theta^{2}\nu_{k}^{2}}+\frac{e^{-4\theta\nu_{k}T}}{2\theta^{2}\nu_{k}^{2}}+
  \frac{4Te^{-2\theta\nu_{k}T}}{\theta\nu_{k}}-\frac{5}{2\theta^{2}\nu_{k}^{2}}  \right).
\end{align}
Using these, together with \eqref{eq:condConv1}, it is straightforward to show that \eqref{eq:linearMLE3} are indeed satisfied.

One way to prove asymptotic normality is to apply the  central limit theorem for martingales (cf. \cite[Theorem~6.1.4]{LototskyRozovsky2017Book} or \cite[Thoerem~5.5.4]{LiptserShiryayevBookMartingales}). By similar arguments as above, using the law of large numbers, first one shows that
$$
\lim_{N\to\infty} \frac{\sum_{k=1}^{N} \xi_k}{\sum_{k=1}^{N} \int_{0}^{T} \nu_k^{2\alpha} q_k^{2(\rho+1)} \bE(u_k^2)\dif t } = 1,
$$
with probability one, and hence by central limit theorem for martingales, we have that
$$
\lim_{N\to\infty} \frac{\sum_{k=1}^{N} \xi_k}{(\sum_{k=1}^{N} \int_{0}^{T} \nu_k^{2\alpha} q_k^{2(\rho+1)} \bE(u_k^2)\dif t)^{1/2} } \overset{\cD}{=} \cN(0,1).
$$
This, combined with \eqref{eq:linearMLE2}, after some simple evaluations, imply the asymptotic normality.
This concludes the proof.
\end{proof}

Analogously, one can study linear diagonalizable equations \eqref{eq:mainAbstract} with $A,B$ linear differential operators that commute. These class of equations were studied in the seminal works \cite{HubnerRozovskiiKhasminskii} and \cite{HuebnerRozovskii} where the spectral approach was first introduced; see also \cite{Huebner1993PhD}. In \cite{HuebnerRozovskii}, the authors showed that the consistency of $\widehat\theta_N$, as $N\to\infty$, depends on the order of the operators $A$ and $B$, and it holds true, if and only if
\begin{equation}\label{eq:ordersAB}
\textrm{ord}(A)\geq \frac12 (\textrm{ord}(\theta A + B) -d).
\end{equation}
This is dictated by the fact that the measures $\bP_\theta, \theta>0$, generated by the solution $u$ of \eqref{eq:mainAbstract}, are mutually singular if and only if \eqref{eq:ordersAB} is satisfied. In \cite{HuebnerRozovskii,LototskyRozovsky2017Book} the authors also study the asymptotic efficiency of MLEs.
If \eqref{eq:ordersAB} does not hold true, then the measures $\bP_\theta, \theta>0$, are absolutely continuous to each other. This is the case with the Example~II from Section~\ref{sec:Intro}, for d=1. The only way to obtain consistency and asymptotic normality  of $\widehat\theta_N$ in this case is to consider large time asymptotics $T\to\infty$, small noise asymptotics $\sigma\to0$, or a combination of large times, small noise and large spectral resolution $N\to\infty$.

Using similar arguments and methods, one can study fully diagonalizable SPDEs with several unknown parameters (in front of $A$ and $B$); cf. \cite{Huebner1993PhD,Huebner1997}.

\smallskip\noindent
\textbf{Almost diagonalizable equations.} Analogous results to fully diagonalizable case hold true if $B$ is linear, but does not commute with $A$.
While the asymptotic behavior of $\widehat{\theta}_N$ given by \eqref{eq:EstGeneral1}, with $\alpha=1,\rho=-1$, remains the same, the proofs become much more technical. These classes of equations were studied in \cite{Lototskiy1996PhD,HuebnerLototskyRozovskii97,Huebner1997,LototskyRozovskii1999,LototskyRozovskii2000,Lototsky2003}.


\subsubsection{Nonlinear Equations} The never diagonalizable nonlinear equations \eqref{eq:mainAbstract}, with $B$ being the nonlinear part, and driven by an additive noise ($M=0$), still can be studied within the spectral approach. The key idea in this case, introduced in \cite{IgorNathanAditiveNS2010}, is to split the solution in its linear and nonlinear part, a technic often used in studying PDEs and SPDEs. Namely, the solution $u$ of equation \eqref{eq:mainAbstract} is written as $u=\bar{u} + v$, where $\bar{u}$  solves the equation $\dif \bar{u} +\theta A\bar{u}\dif t = \sigma \dif W^Q(t)$, $\bar{u}(0)=u_0$, and $v$ is the solution of equation
\begin{equation}\label{eq:NSEv}
 \dif v + \theta A v\dif t = - Bu\dif t, \quad t>0, \  v(0)=0.
\end{equation}
Recall, that consistency is proved by showing that the second term in the right hand side of \eqref{eq:True+Noise} vanishes. Using the splitting argument, this reduces to show that
\begin{align}\label{eq:NSEvanish}
J_1&:=\frac{\int_{0}^{T} (Q^{\rho} A^{\alpha} \bar u^{N},\dif W^{Q,N})}{\int_{0}^{T}   \norm{Q^{\rho/2} A^{(1+\alpha)/2} \bar u^{N}}^2  \dif t} \to 0, \quad
J_2:=\frac{\int_{0}^{T} (Q^{\rho} A^{\alpha} v^{N},\dif W^{Q,N})}{\int_{0}^{T}   \norm{Q^{\rho/2} A^{(1+\alpha)/2} \bar u^{N}}^2  \dif t} \to 0, \\
J_3&:=\frac{\int_{0}^{T}   \norm{Q^{\rho/2} A^{(1+\alpha)/2} \bar (\bar u^N + v^N)}^2  \dif t}{\int_{0}^{T}   \norm{Q^{\rho/2} A^{(1+\alpha)/2} \bar u^{N}}^2  \dif t} \to 1, \quad
J_4:=\frac{\int_{0}^{T} ((Q^{\rho} A^{\alpha}  u^{N}, P^N(B(u^N)- B(u))) \dif t }
 	{\int_{0}^{T}   \norm{Q^{\rho/2} A^{(1+\alpha)/2} \bar{u}^{N}}^2  \dif t}\to 0,
\end{align}
where the converges is either in a.s. sense or in probability. Note that $J_1\to 0$ is already covered by Theorem~\ref{th:MLEadditive}.
Another key point in using the splitting method, is that the nonlinear part $v$ usually is slightly more regular that $\bar u$, and hence its Fourier modes will vanish faster to zero. This observation, used carefully, together with choosing the appropriate $\alpha$ and $\rho$, allows to show that $J_2\to0$, and $J_3\to1$. By akin arguments, and using the specific form of the nonlinear part, one shows that $J_4\to0$.

Despite the fact that the steps described above sound reasonable, each nonlinear equation has to be studied separately. To best of our knowledge, the only nonlinear equations  studied in the current literature are the 2D Navier--Stokes Equations; Example~III from Section~\ref{sec:Intro}. For simplicity of wirtting, let us assume that $q_k=\nu_k^{-\gamma}$. Then, assuming that $\gamma>1$ and $\alpha+1> 2\gamma(\rho+1)$, it was proved in \cite{IgorNathanAditiveNS2010} that $\widehat{\theta}_N$  and $\hat \theta_N^l$ are consistent estimators for $\theta$, as $N\to\infty$, in the context of \eqref{eq:NVS}.
As one may expect, the proposed method, as well as the technical proves from the 2D Navier--Stokes Equations should care out for Burgers equation
$$
\dif u(t,x) + \left(-\theta u_{xx}(t,x) + \frac{1}{2}\pd{x}(u^2(t,x)) \right) \dif t = \sigma \dif W^Q(t,x), \quad t>0,
$$
where $x\in[0,\pi]$, $u(0,x)=u_0$, and $u(t,0)=u(t,\pi)=0$. We leave the detailed analysis of this equation to future studies.

\subsubsection{Fractional noise} A natural problem to consider is parameter estimation for SPDEs driven by a fractional Brownian noise. It turns out that the spectral approach, and the MLEs, can be successfully applied to such equations that a fully diagonalizable and driven by an additive noise. If $w^H(t), t\geq0$, is a fractional Brownian motion with Hurst parameter $H$, then $w^H$ is not a martingale, unless $H=1/2$. However, it is well known that $\int_0^t k_H(t,s)dw^H(s)$, where $s^{\frac12 -H}(t-s)^{\frac12-H}$, is a martingale. Then, it is enough to integrate against $k_H$ the Fourier modes $u_k$ that follow  the dynamics
$$
\dif u_k(t) + (\theta \nu_k + \mu_k )u_k(t)\dif t = \sigma q_k\dif w^H_k(t),
$$
where $w^H_k, k\in\bN$, are independent fractional Brownian motions, and $\mu_k, k\in\bN$, are the eigenvalues of $B$. After this transformation, being in the martingale setup, in \cite{IgorSergeyJan2008} the authors  followed the procedure of deriving the MLEs describe above with $\alpha=1, \rho=-1$, thanks to the Girsanov type theorem developed in \cite{KleptsynaBreton2002,KleptsynaLeBretonRoubaud2000}. Same problem but in small noise asymptotic regime was studied in \cite{PrakasaRao2004}. Since the kernel $k^H$ is singular, proving that the error terms in \eqref{eq:True+Noise} vanish is not an obvious task, and it remains an open problem for almost diagonalizable or nonlinear SPDEs.

\subsection{Time dependent drift}
Let us consider a fully diagonalizable parabolic SPDEs \eqref{eq:mainAbstract} driven by an additive space-time white noise ($q_k=1$), and where $\theta$ is time-dependent. The non-parametric statistics for finite dimensional diffusion is a well developed field (cf. \cite{KutoyantsBook2004}). Similar to MLEs some of the methods from SODEs can be adapted to the SPDE setup, again, using the spectral approach. A kernel based interpolation method was employed in \cite{HuebnerLototsky2000a}. Let $R(t)$ be a compactly supported kernel of order $K\geq 1$, that is, $R$ has compact support, $\int_{\bR}R(t)\dif t=1$, and
$\int_{\bR}t^{j}R(t)\dif t=0$ for $j=1,\ldots,K$. Let $(v_n)_{n\geq 1}$, and $(a_n)_{n\geq1}$ be sequences of positive real numbers, monotonically decreasing and convergent to zero. Define the inverse cut-off function of $u_k$ as follows
\begin{align*}
U_{k,N}(t):=\left\{
\begin{array}{lll}
1/u_{k}(t),\quad |u_{k}(t)|>v_{N},\\
1/v_{N},\quad |u_{k}(t)|\leq v_{N}.
\end{array}\right.
\end{align*}
Then, the kernel based estimator for $\theta(t)$ is defined as
\begin{align}\label{eq:kernalestimator}
\widetilde{\theta}_{N}(t)=\frac1{h_{N}\sum_{k=1}^{N}\nu_{k}}\sum_{k=1}^{N}\int_{0}^{T}R\left(\frac{s-t}{h_{N}}\right)U_{k,N}(s)(\dif u_{k}(s)-\mu_{k}u_{k}(s)\dif s),
\quad t\in[0,T], \ N\geq1.
\end{align}
In \cite{HuebnerLototsky2000a} it was proved that the $\widetilde{\theta}_{N}(t)$ converges in mean-square sense to $\theta(t)$. Also within the spectral approach, in \cite{HuebnerLototsky2000} the authors propose a sieve estimator for time dependent viscosity coefficient $\theta(t)$.

\subsection{Simple multiplicative noise} Let us focus on the Example~IV, where the noise is multiplicative, but driven just by an one dimensional Wiener process. The results of this subsection can be easily extended to finite dimensional noise.  This type of equations are remarkably  interesting from inference point of view, and underpin one more time the singular nature of these problems. On the one hand, \eqref{eq:heatSimpleM} is diagonalizable, in the sense that the Fourier modes are decoupled, and follow the dynamics of a geometric Brownian motion
$$
\dif u_k(t) + \theta \nu_k u_k(t)\dif t = \sigma u_k(t)\dif w(t), \quad t>0, u_k(0) = (u_0,h_k),
$$
where in this case $h_k=\sqrt{2/\pi}\sin(k x)$ and $\nu_k=k^2$ are the eigenfunctions and the corresponding eigenvalues of the $-\Delta$ on $[0,\pi]$ with zero boundary conditions.
Thus, this SPDE falls under the spectral approach. On the other hand, the general mMLE method described in Section~\ref{sec:mMLE} can not be applied directly, and even formally one can not apply Girsanov transformation to the first $N$ Fourier modes considered together. However, we can use each individual Fourier mode to obtain an MLE for $\theta$ (in contrast to the Remark~\ref{rem:curiousreader})
\begin{equation}\label{eq:MLEsimple}
	\check\theta_N = -\frac{1}{\nu_k T} \int_0^T \frac{\dif u_k}{u_k} = \frac{1}{\nu_k T} \ln \frac{u_k(T)}{u_k(0)} + \frac{\sigma^2}{2\nu_k}.
\end{equation}
It is easy to show that $\check\theta_N$ is consistent and asymptotically normal as $N\to\infty$.

What is even more interesting, since each Fourier mode contains the same noise factor $w(t)$, taking any two nontrivial modes allows to eliminate the noise altogether, and to solve for the unknown parameter $\theta$ explicitly
$$
\theta = \frac{1}{T(\nu_m-\nu_k)} \ln\frac{u_k(T) u_m(0)}{u_k(0)u_k(T)}, \quad k\neq m.
$$
Hence, once any two Fourier modes are observed at any time point $T$, the parameter can be found exactly, without any statistical procedure.
In \cite{CialencoLototsky2009}, we call such `estimators' closed-from exact estimators. The fractional noise counterpart of these class of SPDEs is studied in \cite{Cialenco2010}.

\subsection{Hypothesis testing}
Most of the literature on statistical inference for (parabolic) SPDEs concerns the parameter estimation problem in various setups and forms, with only exception  \cite{CXu2014,CXu2015} where the authors study the simple hypothesis problems for the drift coefficient $\theta$ for the following SPDE
\begin{equation}\label{eq:mainSPDE-ht}
\dif u(t,x) + \theta (-\Delta)^\beta u(t,x)\dif t = \sigma \sum_{k\in\bN} \lambda_k^{-\gamma}h_k(x)\dif w_k(t), \quad t\in[0,T], \ u(0,x) = u_0, \ x\in G,
\end{equation}
where $\theta>0$, $\beta\geq1, \ \gamma \geq 0$, $\sigma\in\bR_+$, $\Delta$ is endowed with zero boundary conditions on the bounded domain $G\in\bR^d$, and where $\lambda_k$ are the eigenvalues of the $\sqrt{-\Delta}$.
In this case, the likelihood ratio takes the form
\begin{align}
L(\theta_0,\theta;u^N)=
\exp\Big(-\frac{\theta-\theta_0}{\sigma^{2}}  \sum_{k=1}^N\lambda_k^{2\beta+2\gamma}\big(\int_0^T  u_k(t)du_k(t)
 +\frac{1}{2}(\theta+\theta_0)\lambda_k^{2\beta}\int_0^Tu_k^2(t)dt\big)\Big), \label{eq:RadonNikodymUn}
\end{align}
that yields the (true) MLE for $\theta$ as
\begin{equation}\label{eq:MLE-UN}
\widehat{\theta}_T^N = -\frac{\sum_{k=1}^{N}\lambda_k^{2\beta+2\gamma}\int_0^T u_k(t)du_k(t)}{\sum_{k=1}^{N}\lambda_k^{4\beta+2\gamma}\int_0^T u_k^2(t)dt}.
\quad N\in\bN.
\end{equation}
Although in \cite{CXu2014,CXu2015} both regimes, large times $T\to\infty$ and fine spacial resolution $N\to\infty$ are investigated, we will focus here only on latter, $N\to\infty$.  The estimator $\widehat\theta_N$ is consistent, and asymptotically normal with rate of convergence $N^{\beta/d+1/2}$.
Assume that $\theta$ can take only two values $\theta_0<\theta_1$, and consider the simple hypothesis
$$
\mathscr{H}_0  : \ \theta=\theta_0,  \quad \textrm{vs} \quad \mathscr{H}_1 : \ \theta=\theta_1.
$$
With MLE and its asymptotic properties at hand, it is easy to establish a Neyman-Pearson type lemma.
Indeed, if   $c_\alpha$ is a real number such that
$\bP^{N,T}_{\theta_0}(L(\theta_0,\theta_1,u^N)\ge c_{\alpha})=\alpha$, with $\alpha\in(0,1)$ being the significance level. Then,
$R^*:=\{u^N: L(\theta_0,\theta_1,u^N)\ge c_{\alpha}\}$,
is the most powerful rejection region in the class $\mathcal{K}_{\alpha}:=\left\{R\in\cB(C([0,T];\bR^N)): \bP^{N}_{\theta_0}(R)\leq\alpha\right\}$, i.e.
$\bP^{N,T}_{\theta_1}(R)\le\bP^{N,T}_{\theta_1}(R^*)$, for all $R\in\mathcal{K}_{\alpha}$. The problem is that the constant $c_\alpha$ can not be computed explicitly. To address this issue and to find a Likelihood Ratio type test, we approximate $c_\alpha$ by an appropriately chosen sequence $c_\alpha(N)$, and apply the concept of asymptotically the most powerful test introduced in \cite{CXu2014};  the rejection region $(\widetilde{R}_N)\in\widetilde{\mathcal{K}}$ is \textit{asymptotically the most powerful},  in the class $\widetilde{\mathcal{K}}$, as $N\to\infty$, if
\begin{align}\label{eq:asym-def-N}
\underset{N\to\infty}{\liminf} \frac{1-\bP^{N,T}_{\theta_1}(R_N)}{1-\bP^{N,T}_{\theta_1}(\widetilde{R}_N)}\ge1,\qquad \textrm{ for all } (R_N)\in \widetilde{\mathcal{K}}.
\end{align}
To compensate for the asymptotic nature of this setup, one has to shrink the class $\cK_\alpha$, or more precisely its asymptotic version, by considering tests that have a certain rate of convergence of the Type~I error. Namely, for $\delta\in\bR$, we put
$\widehat{\mathcal{K}}_{\alpha}(\delta):=\left\{(R_N): \limsup_{N\to\infty}\left(\bP^{N}_{\theta_0}(R_N)-\alpha\right)\sqrt{M}\leq{\alpha}_1(\delta)\right\}$,
where $M=\sum_{k=1}^N\lambda_k^{2\beta}$, and $\alpha_1(\delta)$ is an explicitly computable constant (see \cite{CXu2014}, formula (3.15)), and
$$
\widehat{R}_N^\delta=\left\{u^N: L(\theta_0,\theta_1,u^N)\geq {c}^\delta_{\alpha}(N)\right\},
$$ with
$$
{c}^\delta_{\alpha}(N)=\exp\left(-\frac{(\theta_1-\theta_0)^2 TM}{4\theta_0}+\frac{(\theta_1-\theta_0)^2N}{8\theta_0^2 }-\frac{\sqrt{TM}(\theta_1^2-\theta_0^2)}{\sqrt{8\theta_0^3}}q_{\alpha} -\frac{\sqrt{T}(\theta_1^2-\theta_0^2)}{\sqrt{8\theta_0^3}}\delta\right).
$$
Then, the rejection region $(\widehat{R}_N^\delta)$ is asymptotically the most powerful in the class $\widehat{\mathcal{K}}_{\alpha}(\delta)$.
The main challenge is to identify the `right class' of tests, that requires an exact control of the power of the tests as $N\to\infty$. The proofs are rooted in the theory of sharp large deviations adapted to the case of large number of Fourier modes.

In \cite{Markussen2003} the author also studies a simple hypothesis testing problem, in discrete sampling, by testing if an SPDE is parabolic or hyperbolic, also within the spectral approach.

It would be fair to say that this is just the first step towards hypothesis testing problems, and goodness of fit test for SPDEs, with many open problems left.

\section{Discrete sampling}\label{sec:discrete}
The literature on parameter estimation for discretely sampled SPDEs  is limited, while in most applications, the observer will measure one realization of the solution $u$ only at some discrete points $(t_j,x_k)$ in time and/or space. Of course, one way to deal with discretely sampled data, is to discretize or approximate the (m)MLEs using the available discrete data, and show that the statistical properties are preserved. We refer the reader to \cite{PrakasaRao2002,PrakasaRao2003} for some results on this approach.

In \cite{PiterbargRozovskii1997} the authors develop MLE type estimators for $\theta$, within the spectral approach, by assuming that the Fourier modes are sampled at some discrete time points on a finite time interval, and under some additional technical assumptions show that these estimators are consistent and asymptotically normal as the mesh size of the time partition goes to zero.

Note that, if we assume that the solution itself is observed at some space-time grid points, one needs to approximate additionally the Fourier modes. To best of our knowledge, a rigorous asymptotic analysis of this is still an open question. The closest to this approach is \cite{Markussen2003} where the author considers a parabolic (or hyperbolic) SPDE on $[0,1]$ driven by an additive noise, and constructs an approximated MLE, by assuming that the solution is observed at a finite and fixed number $m$ of spacial points, and at discrete time points $t_k=\delta k$,  $k=1,\ldots,n$, and for some $\delta>0$. The approximated MLE is constructed through the Fourier modes, and it is proved that these estimators are consistent and asymptotically normal as $n\to\infty$, while $\delta,m$ are fixed.

As already mentioned, by its nature the spectral approach requires that the Fourier decomposition is performed with respect to the basis formed by the eigenfunctions of the operator $A$. If $A$ is a differential operator, then essentially one has to deal with bounded domains.

Next, we will discuss some results related to discrete sampling of SPDE \eqref{eq:mainAbstract} that are obtained without assuming the spectral approach.
It turns out that for some classes of SPDEs, to estimate $\theta$ and $\sigma$, it is enough to observe the solution at one time instant and discretely on a spacial grid of a finite interval, with mesh diameters going to zero, or just at one spacial point, and over a time-grid interval. Namely, we will focus on two sampling schemes
\begin{enumerate}[(A)]
	\item \textit{Fixed time and discrete space.} For a fixed instant of time $t>0$, and given interval $[a,b]\subset G\subset\bR$, the solution $u$ is observed at points  $(t,x_j), \ j=1,\ldots,m$, with $x_{j}=a+(b-a)j/m,\quad j=0,1,\ldots,m$.
	\item \textit{Fixed space and discrete time.}  For a fixed  $x$ from the interior of $G\subset\bR$, and given time interval $[c,d]\subset(0,+\infty)$, the solution $u$ is observed at points $\{(t_{i},x), \, i=1,\ldots,n\}$, where $t_{i}:=c+ (d-c)i/n,\ i=0,1,\ldots,n$.
\end{enumerate}
For simplicity of writing, we assume that the sampling points form a uniform grid, but generally speaking the results hold true assuming only that the mesh size of the grid goes to zero.
We will use the notation  $\Upsilon^m(a,b)=\set{a_j \mid a_j= a+(b-a)j/m,\ j=0,1,\ldots,m}$ for the uniform partition of size $m$ of a given  interval $[a,b]\subset\bR$. For a given stochastic process $X$ on some interval $[a,b]$, and $p\geq 1$, we define the $p$-variation as
$$
\mV^p(X; [a,b]) :=  \lim_{m\to\infty}\mV^p_m(X; [a,b]), \quad \bP-\textrm{a.s.}, \quad \mV^p_m(X; [a,b]) := \sum_{j=1}^{m} |X(t_j) - X(t_{j-1})|^p,
$$
and $\mV^p_\bP(X; [a,b])$ will denote the $p$-variation when the limit is understood in probability sense; sometimes we will will simply write $\mV^p(X)$, and $\mV^p_m(X)$.

The main idea behind this method is similar to the argument of estimating the volatility coefficient in finite dimensional diffusions through quadratic variation arguments. In fact, estimating $\sigma$ for SPDEs \eqref{eq:mainAbstract} can be achieved by this approach. Estimating the drift $\theta$ is more delicate, and one has to find the correct, and exact,  $p$-variation to be used.

In \cite{PospivsilTribe2007} (see also \cite{Pospivsil2005PhD}) the authors explore this idea, for the stochastic heat equation on whole real line
\begin{align*}
\dif u(t,x)-\theta\Delta u(t,x)\dif t = \sigma(u(t,x))\dif W(t,x), \quad t>0,\quad x\in \bR,
\end{align*}
with $u(0,x)=0,\ x\in\bR$, and were  $\theta>0$ is the parameter of interest, $\sigma:\bR\to\bR$, and $W$ is the space-time white noise on $(0,\infty)\times\bR$. Consider the sampling scheme (A), with the partition $\Psi^m(a,b)$ for some fixed $a,b\in\bR$. It was proved in \cite{PospivsilTribe2007} that
\begin{align*}
\wh{\theta}_{m,t}:=\frac{b-a}{2m}\frac{\sum_{j=1}^{m}\sigma^{2}(u(t,x_{j}))}{\sum_{j=1}^{n}(u(t,x_{j})-u(t,x_{j-1}))^{2}}
\end{align*}
is a weakly consistent estimator\footnote{An estimator is weakly consistent if it converges to the true parameter in probability.}  of $\theta$ as $m\to\infty$. Similarly, for some fixed $0<c,d\leq T$ and $x\in\bR$, assuming that the solution is sampled by scheme (B), the estimator
\begin{align*}
\wh{\theta}_{n,x}:=\frac{3(d-c)}{n\pi}\frac{\sum_{j=1}^{n}\sigma^{4}(u(t_{j},x))}{\sum_{j=1}^{n}(u(t_{j},x)-u(t_{j-1},x))^{4}}
\end{align*}
is weakly consistent as $n\to\infty$. As one may guess, $\wh\theta_{m,t}$ comes from the computations of the quadratic variation, while $\wh\theta_{n,x}$ is due to the fourth-variation of the solution. Moreover, assuming that $\sigma(u)=\sigma u$, for some positive constant $\sigma$ (i.e. Example~V, with $G=\bR$, and driven by space-time white noise), and assuming that $\theta$ is known, then the estimators
\begin{align*}
\wh{\sigma}^{2}_{m,t}=\frac{2m\theta\sum_{j=1}^{m}(u(t,x_{j})-u(t,x_{j-1}))^{2}}{(b-a)\sum_{j=1}^{m}u^{2}(t,x_{j})}, \quad
\wh{\sigma}^{2}_{n,x}&:=\sqrt{\frac{n\theta\pi\sum_{i=1}^{n}(u(t_{i},x)-u(t_{i-1},x))^{4}}{3(d-c)\sum_{i=1}^{n}u^{4}(t_{i},x)}}
\end{align*}
are weakly consistent estimators of $\sigma$, respectively when $m\to\infty$, and $n\to\infty$. The asymptotic normality of these estimators remains an open problem.

Two recent independent studies \cite{CialencoHuang2017,BibingerTrabs2017} were devoted to the stochastic heat equation driven by an additive space-time white noise.

In  \cite{CialencoHuang2017} the authors further explore the quadratic variation method, starting with a simple and intuitively clear observation: the $p$-variation of a stochastic process is invariant with respect to smooth perturbations. Hence, if the $p$-variation of a process $X$ can be computed by an explicit formula, and the parameter of interest enters non-trivially into this formula, one can derive consistent estimators of this parameter. However, since the $p$-variation of the perturbed process $X+Y$ remains the same, given that $Y$ is smooth enough, then the same estimator remains consistent assuming that $X+Y$ is observed. Analogous arguments remain valid for asymptotic normality property; see \cite[Proposition~2.1]{CialencoHuang2017}. Hence, it remains to establish such representations for the solutions of the considered SPDEs. This can be done for equation \eqref{eq:heatAdditive},  driven by space-time white noise, with $G$ being either a finite interval or the whole real line ($d=1$).

Assume that the solution $u$ is sampled according to the sampling scheme (A), for some fixed $t>0$. Then, one can show that the  following estimators for $\theta$ (assuming $\sigma$ is known) and $\sigma^2$ (assuming $\theta$ is known), respectively,
\begin{align*}
\wh{\theta}_{m,t}  :=\frac{(b-a)\sigma^{2}}{2\sum_{j=1}^{m}(u(t,x_{j})-u(t,x_{j-1}))^{2}}, \quad
\wh{\sigma}^{2}_{m,t} :=\frac{2\theta}{b-a}\sum_{j=1}^{m}(u(t,x_{j})-u(t,x_{j-1}))^{2},
\end{align*}
are consistent and asymptotically normal with rate of convergence $\sqrt{m}$.

Similarly, using the sampling scheme (B),  the estimators
\begin{align}
\wh{\theta}_{n,x} :=\frac{3(d-c)\sigma^{4}}{\pi\sum_{i=1}^{n}(u(t_{i},x)-u(t_{i-1},x))^{4}}, \quad
\wh{\sigma}^2_{n,x} :=\sqrt{\frac{\theta\pi}{3(d-c)}\sum_{i=1}^{n}(u(t_{i},x)-u(t_{i-1},x))^{4}},
\end{align}
are also consistent and asymptotically normal with rate of convergence $\sqrt{n}$. It is interesting to note that the estimators and the rate of converge remain the same for both, bounded and unbounded domains, although the proof of asymptotic results differ.

Finally, let us consider a variation of equation \eqref{eq:heatAdditive}, that was studied in \cite{BibingerTrabs2017}
\begin{align*}
\dif u(t,x)&=\left(\theta_{2}\frac{\partial^{2}}{\partial x^{2}}u(t,x)+\theta_{1}\frac{\partial}{\partial x}u(t,x)+\theta_{0}u(t,x) \right)\dif t+\sigma(t)\dif W(t,x),\quad  t>0, \ x\in [0,1]\\
u_{0}&=\xi.
\end{align*}
and with zero boundary condition, and where $\sigma$ is an $\alpha$-H\"older continuous function with $\alpha\in(1/2,1]$.  Assume the sampling scheme (B) over $\Psi_n(0,T)$. In \cite{BibingerTrabs2017} the authors construct an estimator $\textrm{RV}_{n,x}$ of the integrated square volatility and showed that
 \begin{align*}
\textrm{RV}_{n,x} := \frac{1}{\sqrt{n T}} \sum_{i=1}^{n}(u(t_{i},x)-u(t_{i-1},x))^{2} \xrightarrow[n\to\infty]{\bP}
\frac{e^{-x\theta_{1}/\theta_{2}}}{\sqrt{\theta_2\pi}} \int_{0}^{T}\sigma^2(t)\dif t,
\end{align*}
and also established an asymptotic normality result for this estimator. In contrast to \cite{CialencoHuang2017} where the authors use elements of  Malliavin calculus, the methods used in \cite{BibingerTrabs2017} are rooted in the mixing theory for Gaussian time series.

\section{Other methods and results}
In this section we will briefly report on some existing results relevant to the identification of the drift and volatility for SPDE.

\smallskip\noindent\textbf{Trajectory fitting estimators.}
Note that as such, the spectral approach just reduces the original infinite-dimensional problem to a finite dimensional one, and hence, one can try to apply any available method from statistical inference for SODEs to the projected system, not necessarily based on MLEs. One such attempt, proposed in \cite{CialencoGongHuang2016}, was to investigate the applicability of the so called trajectory fitting estimators for ergodic processes first introduced by Kutoyants~\cite{Kutoyants1991} (see also \cite[Section 1.3 \& Section 2.3]{KutoyantsBook2004}),  that are an  analog of the least squares estimators widely used in time-series analysis.

\smallskip
\noindent \textbf{Filtering.} Generally speaking results on filtering in the context of SPDEs are rather scarce. Using spectral approach, and assuming that $\theta(t)$ follows an unobservable Ito diffusion, Lototsky~\cite{Lototsky2004} derives an optimal filter for $\theta$, which can be viewed as a generalization of Kalman-Bucy filter. Using different methods and technics than those mentioned above, in as series of papers Aihara et al. \cite{Aihara1991a,Aihara92,Aihara1998,Aihara1988,Aihara1998a,AiharaBagchi1989} study a non-parametric estimation problem of a space dependent $\theta$, combined with a filtering problem by assuming that the observations are $y(t) = \int_0^t F(u(s))\dif s + w(t)$, where where $u$ is the solution of the corresponding parabolic SPDE driven by an additive noise, $F$ is an operator on $H$ with finite-dimensional range, and $w$ is a finite-dimensional Brownian motion.

\smallskip
\noindent \textbf{Bayesian Inference} for SPDEs is another area with few existing results \cite{Bishwal2002,Bishwal1999,PrakasaRao2000} all within the spectral approach.

\smallskip
\noindent \textbf{Concluding remarks.} While this survey is dense and most of the proofs have been omitted, the overall statistical inference for SPDEs is yet to become a mature field with many practically important problems remaining open. Besides some open problems mentioned above, for example, there is little known about estimation of the drift of simplest SPDE, the stochastic heat equation, driven by a multiplicative space-time noise; equation \eqref{eq:heatMultiple}, Example~V. The inference for SPDEs driven by non-Gaussian noise is another open field for investigations.

\section*{Acknowledgments}
The author is grateful to Yicong Huang for reading the initial draft of this paper and making a series of useful comments that improved greatly the final manuscript.

\bibliographystyle{alpha} 
{\footnotesize
\def\cprime{$'$}

}
\end{document}